\definecolor{webgreen}{rgb}{0,.5,0}
\definecolor{webbrown}{rgb}{.6,0,0}
\theoremstyle{plain}
\newtheorem{theorem}{Theorem}
\newtheorem{corollary}[theorem]{Corollary}
\newtheorem{lemma}[theorem]{Lemma}
\theoremstyle{definition}
\newtheorem{problem}{Problem}
\theoremstyle{remark}
\title{Extending Dekking's construction of an infinite
  binary word avoiding abelian $4$-powers}
\author[1]{James Currie}
\author[2]{Lucas Mol}
\author[1]{Narad Rampersad}
\author[3]{Jeffrey Shallit}
\affil[1]{
Department of Mathematics and Statistics,
University of Winnipeg,
515 Portage Ave.,
Winnipeg, MB R3B 2E9,
Canada,
\url{j.currie@uwinnipeg.ca}, \url{n.rampersad@uwinnipeg.ca}.
}
\affil[2]{
Department of Mathematics and Statistics,
Thompson Rivers University,
 805 TRU Way,
Kamloops, BC V2C 0C8,
Canada,
\url{lmol@tru.ca}.
}
\affil[3]{
School of Computer Science,
University of Waterloo,
Waterloo, ON N2L 3G1, Canada,
\url{shallit@uwaterloo.ca}.
}
\begin{document}
\maketitle
\begin{abstract}
  We construct an infinite binary word with critical exponent $3$ that
  avoids abelian $4$-powers.  Our method gives an algorithm to
  determine if certain types of morphic sequences avoid additive powers.
  We also show that there are
  $\Omega(1.172^n)$ binary words of length $n$ that avoid abelian
  $4$-powers, which improves on previous estimates.
\end{abstract}

\section{Introduction}
Two words $x$ and $y$ are \emph{abelian equivalent}
if $x$ and $y$ are anagrams of each other.  An \emph{abelian square}
(resp.\ \emph{abelian cube}) is a word $xy$ (resp.\ $xyz$) where $x$
and $y$ (resp.\ $x$, $y$, and $z$) are abelian equivalent.
More generally, an \emph{abelian $k$-power} is a word $x_1x_2\cdots x_k$,
where the $x_i$ are all abelian equivalent.

Erd\H{o}s \cite{Erd61} asked if there was an infinite word over a
finite alphabet that avoided abelian squares.  Evdokimov \cite{Evd68}
constructed such a word over a $25$-letter alphabet; Ker\"anen
\cite{Ker92} improved this to a $4$-letter alphabet.  Dekking
\cite{Dek79} showed that there is an infinite ternary word that avoids
abelian cubes and an infinite binary word that avoids abelian
$4$-powers.  This paper presents some results that strengthen or
extend Dekking's result on the binary alphabet.

The first direction we explore in trying to strengthen Dekking's
construction is to consider the simultaneous avoidance of abelian
$4$-powers and ordinary (i.e., non-abelian) powers.  Ordinary powers
are defined as follows.  Let $w = w_1w_2\cdots w_n$ be a word of
length $n$ and \emph{period} $p$; i.e., $w_i = w_{i+p}$ for
$i = 0, \ldots, n-p$.  If $p$ is the smallest period of $w$, we say
that the \emph{exponent} of $w$ is $n/p$.  We also say that $w$ is an
\emph{$(n/p)$-power} of \emph{order $p$}.  Words of exponent $2$
(resp.~$3$) are called \emph{squares} (resp.~\emph{cubes}).  Words of
exponent $>2$ (resp.~$>3$) are called \emph{$2^+$-powers}
(resp.~\emph{$3^+$-powers}). If ${\bf x}$ is an infinite sequence, we
define the \emph{critical exponent} of ${\bf x}$ as
$$ \sup \{ e \in \mathbb{Q}: \text{ there is a factor of } {\bf x}
\text{ with exponent } e \}. $$ One can then ask: \emph{What is the
  least critical exponent among all infinite binary words that avoid
  abelian $4$-powers?}

We answer this question in this paper, but our proof involves a method
for proving the avoidance of a variation on abelian powers, namely,
\emph{additive powers}.  An \emph{additive square} is a word $xy$ over
an \emph{integer alphabet} where $x$ and $y$ have the \emph{same
  length} and the sum of the letters of $x$ is the same as the sum of
the letters of $y$.  \emph{Additive cubes} and {additive $k$-powers}
are defined analogously.  Over the alphabet $\{0,1\}$, a word is an
abelian $k$-power if and only if it is an additive $k$-power, but on
larger alphabets these two notions are no longer equivalent.  Three
important papers on the avoidance of additive powers are: the paper of
Cassaigne, Currie, Schaeffer, and Shallit~\cite{CCSS14}, the paper
of Rao and Rosenfeld~\cite{RR18}, and the paper of Lietard and
Rosenfeld~\cite{LR20}.

Our first main result is the following:

\begin{theorem}\label{main_theorem1}
  There exists an infinite binary word with critical exponent $3$ that
  avoids additive/abelian $4$-powers.
\end{theorem}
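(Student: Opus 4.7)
The plan is to exhibit an explicit morphism $h : \Sigma^* \to \Sigma^*$ on a small alphabet $\Sigma$, together with a coding $\tau : \Sigma \to \{0,1\}$, so that $\mathbf{w} = \tau(h^\omega(a))$ is an infinite binary word with critical exponent exactly $3$ that avoids additive/abelian $4$-powers. The candidate will be an enrichment of Dekking's construction, where the extra alphabet structure carries just enough information about positions and Parikh sums to let us simultaneously control the exponent and the additive/abelian structure of factors.

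There are two things to check for this candidate. The first is that $\mathbf{w}$ has critical exponent $3$. Because $\mathbf{w}$ is the coding of a fixed point of a uniform morphism, it is automatic, so in principle the bound $e(\mathbf{w}) \le 3$ is decidable via a Walnut-style first-order check on repetitions. Alternatively, one can use the standard bootstrap for pure morphic words: any sufficiently long $3^+$-power in $\mathbf{w}$ must pre-image under $h$ (up to a bounded shift) to a shorter $3^+$-power in $h^\omega(a)$, reducing the problem to inspection of finitely many short factors. Matching this with an explicit cube appearing in $\mathbf{w}$ gives equality.

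The second and main task is to prove that $\mathbf{w}$ avoids additive $4$-powers. The obstacle is that additive/Parikh constraints are much weaker than ordinary-power constraints, so a putative additive $4$-power in $\mathbf{w}$ need not be aligned with the $h$-block structure, and no classical pumping argument applies directly. The strategy is the algorithm hinted at in the abstract: reduce the existence of an arbitrarily long additive $4$-power to the existence of a \emph{template}, a finite datum recording the $h^k$-block boundaries cutting the four parts and the Parikh discrepancies across them. Using a Dickson's-lemma or eigenvector argument on the incidence matrix of $h$, one shows that only finitely many templates can be realized by arbitrarily long additive $4$-powers, so a mechanical enumeration either finds one in $\mathbf{w}$ or certifies that no long additive $4$-power exists; short cases are dispatched by direct search.

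The hard part is setting up the template machinery so that the question becomes finite, and then choosing the morphism $h$ so that the finite certificate actually succeeds \emph{and} the critical-exponent bound still holds. Finding such an $h$ amounts to a guided search over small morphisms on a slightly enlarged alphabet, using the decision procedure as the verification oracle; once a witness is located, the theorem follows by combining the two verifications above.
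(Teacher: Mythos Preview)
Your plan matches the paper's approach in outline: the paper takes $f:\{0,1,2\}^*\to\{0,1,2\}^*$ with $f(0)=001$, $f(1)=012$, $f(2)=212$ and a length-$22$ coding $g:\{0,1,2\}^*\to\{0,1\}^*$, proves $g(f^\omega(0))$ avoids additive $4$-powers via a template argument, and certifies critical exponent exactly $3$ with Walnut. So the overall architecture you describe---morphic construction, automatic check for the exponent, template machinery for additive powers---is correct.

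There is, however, a genuine technical point your sketch glosses over and which is the heart of the paper's contribution. You speak of ``Parikh discrepancies'' and ``the incidence matrix of $h$''; that is the abelian template method of Rao--Rosenfeld, and the paper explicitly reports that this computation did \emph{not} terminate for them. What makes the argument go through is a different linear-algebraic reduction: the morphisms $f$ and $g$ are chosen to be \emph{linear} in the sense that $|h(x)|=a+bx$ and $S(h(x))=c+dx$ for each letter $x$, so that the $2$-vector $\sigma(u)=[|u|,S(u)]^T$ satisfies $\sigma(h(u))=M_h\sigma(u)$ for a fixed $2\times 2$ integer matrix $M_h$. Templates then carry only these $2$-dimensional discrepancy vectors, and the finiteness of the ancestor set follows from the eigenvalues of $M_h$ exceeding $1$ in absolute value (so $\sum M_h^{-i}$ converges and the integer vectors $D_i$ are bounded). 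This is not a Dickson's-lemma argument, and it is not available for a generic morphism on an enlarged alphabet; the linearity constraint is what cuts the template space down to something checkable ($17104$ templates in the paper's computation). Without identifying this device, your search over ``small morphisms on a slightly enlarged alphabet'' is unlikely to find a candidate for which the verification oracle halts.
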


Another problem that goes beyond the existence of infinite binary
words that avoid abelian $4$-powers is to estimate the number of such
words of length $n$.  Currie \cite{Cur04} showed that there are
$\Omega(1.044^n)$ binary words of length $n$ that avoid abelian
$4$-powers.  We improve this to the following:

\begin{theorem}\label{main_theorem2}
  There are $\Omega(1.172^n)$ binary words of length $n$ that avoid
  abelian $4$-powers.
\end{theorem}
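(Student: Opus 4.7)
The plan is to leverage the morphic structure exploited in Theorem~\ref{main_theorem1}, together with the associated algorithmic verification, to exhibit an exponentially large family of abelian-$4$-power-free binary words. The guiding principle is that many lower bounds of this shape come from a substitution $h : \Sigma^* \to \{0,1\}^*$ of uniform length $\ell$ together with a set $F$ of forbidden factors over an enlarged alphabet $\Sigma$ such that $h(v)$ is abelian-$4$-power-free whenever $v \in \Sigma^*$ avoids every element of $F$. If such a pair $(h,F)$ can be found, then distinct $F$-avoiding words $v,v'$ of length $n$ yield distinct binary images $h(v), h(v')$ of length $\ell n$, so the growth rate of abelian-$4$-power-free binary words is bounded below by $\rho^{1/\ell}$, where $\rho$ is the growth rate of the (regular, indeed subshift-of-finite-type) language of $F$-avoiding words over $\Sigma$.

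First I would search, using the morphisms arising in the proof of Theorem~\ref{main_theorem1} as a starting point, for a morphism $h$ of modest length $\ell$ over a small alphabet $\Sigma$, together with a short list $F$ of forbidden factors for which the avoidance implication above can be verified. The density of $\Sigma$-letters needed, and the permissible transitions encoded by $F$, would be chosen so that the spectral radius $\rho$ of the transition matrix satisfies $\rho^{1/\ell} \geq 1.172$; this number would then come out as the Perron eigenvalue of an explicit integer matrix computed from the forbidden-factor automaton.

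The main obstacle is the first step: certifying that every $F$-avoiding input produces an abelian-$4$-power-free binary output. Because abelian equivalence is not detected by finite windows, one cannot simply apply a standard automaton check. Instead, I would adapt the strategy used in Theorem~\ref{main_theorem1}: show, by a desubstitution argument, that any hypothetical abelian $4$-power in $h(v)$ of sufficiently large order must arise from a constrained "template" over $\Sigma$ whose length is bounded in terms of $\ell$; then argue that all such templates are blocked either by a short forbidden factor in $F$ or by an abelian obstruction in $v$ itself. Finite-length abelian $4$-powers are ruled out by direct inspection. This reduces the whole verification to a finite computation that should be handled by the additive-power-avoidance algorithm already developed for Theorem~\ref{main_theorem1}.

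The delicate balancing act is to make $\ell$ and $|\Sigma|$ small enough that the finite verification is feasible, while keeping the growth constant $\rho^{1/\ell}$ above the target $1.172$. I expect this to be the step where most of the paper's effort lies, with the forbidden set $F$ tuned empirically and the avoidance check carried out by machine using the algorithmic framework of Theorem~\ref{main_theorem1}.
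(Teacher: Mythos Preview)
Your proposal sketches a plausible general strategy, but it diverges substantially from what the paper actually does, and it rests on speculative ingredients that are not supplied. The paper does \emph{not} use the template machinery of Theorem~\ref{main_theorem1} for Theorem~\ref{main_theorem2} at all. Instead it works with \emph{multi-valued} substitutions $\theta_0,\theta_1:\{0,1\}^*\to 2^{\{0,1\}^*}$ (with $\theta_0(0)=\{0001\}$, $\theta_0(1)=\{011,101\}$, and $\theta_1$ obtained by swapping the images), whose abelian-$4$-power-freeness was already established by Currie via a Dekking-type argument. The exponential growth comes from the multiplicity of images: applying such a substitution to a fixed abelian-$4$-power-free word $v$ yields $m_0^{|v|_0}m_1^{|v|_1}$ distinct abelian-$4$-power-free outputs. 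The improvement to $1.172$ is obtained by composing $\theta_0$ and $\theta_1$ in carefully chosen orders $\theta_x=\theta_{x_k}\circ\cdots\circ\theta_{x_1}$, applying a frequency-weighted counting lemma (using the known letter frequency $\alpha=(\sqrt{5}-1)/2$ in Dekking's word $h^\omega(0)$), and optimizing over all $x\in\{0,1\}^{\le 15}$ by computer.

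Your route (a single morphism into $\{0,1\}^*$ plus a forbidden-factor subshift over $\Sigma$, with growth read off a Perron eigenvalue) is a standard paradigm, but two gaps stand out. First, nothing in the proposal explains why $\rho^{1/\ell}\ge 1.172$ should be attainable; taking the morphisms of Theorem~\ref{main_theorem1} as a seed gives at best $|\Sigma|^{1/\ell}=3^{1/22}\approx 1.051$, well short of the target, so a quite different $(h,\Sigma,F)$ would have to be discovered. Second, the template method in Section~\ref{sec_add4pow} certifies the absence of additive $4$-powers in \emph{one specific} morphic word $g(f^\omega(0))$; extending it to show that $h(v)$ is abelian-$4$-power-free for \emph{every} $F$-avoiding $v$ is a genuinely stronger statement that you have not justified. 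The paper sidesteps both issues by leaning on Currie's prior structural result and on multi-valuedness rather than on any new avoidance verification.
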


\section{The construction for Theorem~\ref{main_theorem1}}\label{sec_add4pow}
Let $f:\{0,1,2\}^*\to \{0,1,2\}^*$ be the morphism defined by
$$f(0)=001,\quad f(1)=012,\quad f(2)=212$$
and let $g:\{0,1,2\}^*\to \{0,1\}^*$ be the morphism defined by
\begin{align*}
  g(0)&=0001001110010001100011\\
  g(1)&=0001001110011101100011\\
  g(2)&=0111001110011101100011.
\end{align*}

We will prove that both $f^\omega(0)$ and $g(f^\omega(0))$ avoid
additive $4$-powers.  We therefore introduce some notation related to
additive powers.  Let $u$ be a word over an integer alphabet.
Then we write:
\begin{itemize}
\item $|u|$ for the length of $u$;
\item $S(u)$ for the sum of the letters of $u$; and,
\item $\sigma(u)$ for the vector $[|u|, S(u)]^T$.
\end{itemize}
Also, if $h:X^*\to Y^*$ is a morphism, we define
$$W_h = \max_{a \in X} |h(a)|.$$
In particular, we have $W_f=3$ and $W_g=22$.

\begin{lemma}\label{sum_matrix}
  Let $X$ and $Y$ be subsets of the integers and let $h:X^* \to Y^*$
  be a morphism.  Suppose that for every $x \in X$ we have $|h(x)|=a+bx$
  and $S(h(x)) = c + dx$ for some integers $a,b,c,d$.  Define the matrix
  $$M_h = \begin{bmatrix}a&b\\c&d\end{bmatrix}.$$ Then for any word $u
  \in X^*$ we have $\sigma(h(u)) = M_h\sigma(u)$.  Consequently, if $M_h$ is
  invertible, we have $\sigma(u) = M_h^{-1}\sigma(h(u))$.
\end{lemma}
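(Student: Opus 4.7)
The plan is to reduce everything to the additivity of $\sigma$ under concatenation and then check the identity one letter at a time. The crucial observation is that for any two words $u, v \in X^*$, we have $|uv| = |u| + |v|$ and $S(uv) = S(u) + S(v)$, so $\sigma(uv) = \sigma(u) + \sigma(v)$. In particular, for $u = x_1 x_2 \cdots x_n$ with each $x_i \in X$, both $\sigma(u) = \sum_{i=1}^n \sigma(x_i)$ and $\sigma(h(u)) = \sum_{i=1}^n \sigma(h(x_i))$, since $h$ is a morphism.

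Next I would verify the single-letter case: for $x \in X$, we have $\sigma(x) = [1,x]^T$, and by hypothesis $\sigma(h(x)) = [a+bx,\, c+dx]^T$. A direct computation gives
\[
M_h \sigma(x) = \begin{bmatrix} a & b \\ c & d \end{bmatrix} \begin{bmatrix} 1 \\ x \end{bmatrix} = \begin{bmatrix} a+bx \\ c+dx \end{bmatrix} = \sigma(h(x)),
\]
so the claim holds on letters. Combining this with the additivity observed above and the linearity of matrix multiplication yields
\[
\sigma(h(u)) = \sum_{i=1}^n \sigma(h(x_i)) = \sum_{i=1}^n M_h \sigma(x_i) = M_h \sum_{i=1}^n \sigma(x_i) = M_h \sigma(u),
\]
which is the desired identity. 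The base case $u = \varepsilon$ is immediate since both sides are the zero vector.

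The final sentence about invertibility is then a one-line consequence: left-multiplying $\sigma(h(u)) = M_h \sigma(u)$ by $M_h^{-1}$ gives $\sigma(u) = M_h^{-1}\sigma(h(u))$. There is essentially no obstacle here; the only thing to be careful about is ensuring that the hypothesis $|h(x)| = a+bx$ and $S(h(x)) = c+dx$ is read as a condition with the \emph{same} constants $a,b,c,d$ for every $x \in X$, since otherwise the matrix $M_h$ would not be well-defined and the linearity argument would break down.
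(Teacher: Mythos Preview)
Your proof is correct and follows essentially the same approach as the paper: both reduce to the single-letter identity $\sigma(h(x)) = M_h\sigma(x)$ via the additivity of $\sigma$ under concatenation. The paper phrases this as an induction on $|u|$ (peeling off one letter at a time) whereas you sum over all letters at once, but this is a cosmetic difference rather than a substantive one.
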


\begin{proof}
  The result clearly holds when $u$ is the empty word, so suppose $u$
  is non-empty and write $u=u'x$, where $x$ is the last letter of $u$.
  If the claim holds for $u'$, then we have
  \begin{align*}
    M_h\sigma(u) &= M_h\begin{bmatrix}|u'x|\\S(u'x)\end{bmatrix}\\
            &= \begin{bmatrix}a|u'x|+bS(u'x)\\c|u'x|+dS(u'x)\end{bmatrix}\\
            &= \begin{bmatrix}a(|u'|+1)+b(S(u')+x)\\c(|u'|+1)+d(S(u')+x)\end{bmatrix}\\
            &= \begin{bmatrix}a|u'|+a+bS(u')+bx\\c|u'|+c+dS(u')+dx\end{bmatrix}\\
            &= \begin{bmatrix}|h(u')|+|h(x)|\\S(h(u'))+S(h(x))\end{bmatrix}\\
            &= \begin{bmatrix}|h(u)|\\S(h(u))\end{bmatrix}\\
            &= \sigma(h(u)).\qedhere
  \end{align*}
\end{proof}
    
Note that for $x \in \{0,1,2\}$, we have $|f(x)|=3+0x$ and
$S(f(x))=1+2x$.  Thus, we have
$$M_f = \begin{bmatrix}3&0\\1&2\end{bmatrix}.$$
Similarly, for $x \in \{0,1,2\}$, we have $|g(x)|=22+0x$ and
$S(g(x))=9+2x$, so we have
$$M_g = \begin{bmatrix}22&0\\9&2\end{bmatrix}.$$
Clearly, both $M_f$ and $M_g$ are invertible.

We adapt the \emph{template method} (see \cite{CR12} for an early
version of this method, or \cite{RR18} for a much more powerful
version of the method) to deal with additive powers.  A
\emph{template} (for additive $4$-powers) is an $8$-tuple
$$t=[a_0,a_1,a_2,a_3,a_4,d_0,d_1,d_2],$$ where $a_i \in \{0,1,2\}^*$,
$|a_i|\le 1$, and the $d_i$ are in $\mathbb{Z}^2$. A word $w$ is an
\emph{instance} of $t$ if we can write $$w =a_0x_0a_1x_1a_2x_2a_3x_3a_4,$$
such that for each $i$, $d_i=\sigma(x_{i+1})-\sigma(x_i)$.
Let $h \in \{f,g\}$.
For $A \in \{0,1,2\}^*$, $|A|\le 1$, an \emph{$h$-split} of $A$ is a
triple $[p,a,s]$, such that $h(A)=pas$, and $a \in \{0,1,2\}^*$,
$|a|\le 1$.

Let
$$T=[A_0,A_1,A_2,A_3,A_4,D_0,D_1,D_2] \text{ and }
t=[a_0,a_1,a_2,a_3,a_4,d_0,d_1,d_2]$$
be templates.  We say that $T$ is an \emph{$h$-parent} of $t$
if the $A_i$ have $h$-splits $[p_i,a_i,s_i]$ such that
$$d_i = M_hD_i + b_i,$$
where $b_i = \sigma(s_{i+1}p_{i+2}) - \sigma(s_ip_{i+1})$.
We call a template
$t'$ an \emph{$h$-ancestor} of $t$ if $t'$ is in the reflexive and
transitive closure of the $h$-parent relation on $t$.

\begin{lemma}\label{forward}
  Let $h \in \{f,g\}$ and let $U \in \{0,1,2\}^*$.
  Suppose template $T=[A_0,A_1,A_2,A_3,A_4,D_0,D_1,D_2]$ is an $h$-parent of template $t=[a_0,a_1,a_2,a_3,a_4,d_0,d_1,d_2]$.
  If $U$ contains an instance of $T$, then $h(U)$ contains an instance
  of $t$.
\end{lemma}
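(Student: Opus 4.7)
The plan is to unpack the hypothesis that $U$ contains an instance of $T$, apply $h$, and then reassemble the image using the $h$-splits to exhibit an explicit instance of $t$ inside $h(U)$.

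First, I would write $U = A_0 X_0 A_1 X_1 A_2 X_2 A_3 X_3 A_4$ (or, more precisely, recognize that some factor of $U$ has this form) with $D_i = \sigma(X_{i+1}) - \sigma(X_i)$ for $i = 0,1,2$. Applying $h$ and inserting the given $h$-splits $h(A_i) = p_i a_i s_i$ yields
\[
h(U) = p_0\, a_0\, s_0\, h(X_0)\, p_1\, a_1\, s_1\, h(X_1)\, p_2\, a_2\, s_2\, h(X_2)\, p_3\, a_3\, s_3\, h(X_3)\, p_4\, a_4\, s_4.
\]
The natural candidate is then $x_i := s_i\, h(X_i)\, p_{i+1}$ for $i = 0, 1, 2, 3$, which identifies $a_0 x_0 a_1 x_1 a_2 x_2 a_3 x_3 a_4$ as a factor of $h(U)$.

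It remains to verify the additive condition $d_i = \sigma(x_{i+1}) - \sigma(x_i)$ for $i = 0,1,2$. Using additivity of $\sigma$ on concatenations, I would compute
\[
\sigma(x_{i+1}) - \sigma(x_i) = \bigl[\sigma(s_{i+1} p_{i+2}) - \sigma(s_i p_{i+1})\bigr] + \bigl[\sigma(h(X_{i+1})) - \sigma(h(X_i))\bigr],
\]
then invoke Lemma \ref{sum_matrix} to rewrite the second bracket as $M_h\bigl[\sigma(X_{i+1}) - \sigma(X_i)\bigr] = M_h D_i$, while the first bracket is exactly $b_i$ by definition. Combining gives $\sigma(x_{i+1}) - \sigma(x_i) = M_h D_i + b_i = d_i$, which is the defining property of $t$.

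The argument is essentially a bookkeeping exercise: nothing deep is happening once the definitions are spelled out. The only step that requires attention is matching the indices correctly, ensuring that $x_i$ absorbs the suffix $s_i$ coming from $h(A_i)$ on the left and the prefix $p_{i+1}$ coming from $h(A_{i+1})$ on the right, so that the leftover $a_i$'s align to form the new template instance and the boundary pieces $s_ip_{i+1}$ contribute exactly the correction $b_i$ predicted by the $h$-parent relation. I expect no real obstacle beyond keeping the subscripts straight.
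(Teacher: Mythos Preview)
Your proposal is correct and matches the paper's own proof essentially line for line: the paper also takes the instance $V = A_0X_0A_1X_1A_2X_2A_3X_3A_4$ inside $U$, sets $x_i = s_i h(X_i) p_{i+1}$, and performs the same $\sigma$-computation invoking Lemma~\ref{sum_matrix} to obtain $\sigma(x_{i+1}) - \sigma(x_i) = M_h D_i + b_i = d_i$.
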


\begin{proof}
Suppose that $U$ contains an instance $V=A_0X_0A_1X_1\cdots X_3A_4$
of $T$. Since $T$ is an $h$-parent of $t$, the $A_i$ have $h$-splits $[p_i,a_i,s_i]$ such that $d_i=M_hD_i+b_i$, where $b_i=\sigma(s_{i+1}p_{i+2})-\sigma(s_ip_{i+1})$.  Thus $u=h(U)$ contains a factor
$v=a_0x_0a_1x_1a_2x_2a_3x_3a_4$, where $x_i=s_ih(X_i)p_{i+1}$.  We have
\begin{align*}
\sigma(x_{i+1})-\sigma(x_i)
      &= \sigma(s_{i+1}h(X_{i+1})p_{i+2})-\sigma(s_ih(X_i)p_{i+1})\\
  &= \sigma(h(X_{i+1})) + \sigma(s_{i+1}p_{i+2}) -
    \sigma(h(X_i)) - \sigma(s_ip_{i+1})\\
  &= M_h\sigma(X_{i+1}) + \sigma(s_{i+1}p_{i+2}) -
    M_h\sigma(X_i) - \sigma(s_ip_{i+1})\\
  &= M_h(\sigma(X_{i+1})-\sigma(X_i)) + \sigma(s_{i+1}p_{i+2}) -
    \sigma(s_ip_{i+1})\\
      &= M_hD_i + b_i\\
      &=d_i
\end{align*}
Hence, the word $v$ is an instance of $t$.
\end{proof}

If $t$ is a template, we write $d_i = \left[d_i^{(0)}, d_i^{(1)}\right]$ and
define  the quantities $$\Delta(t) = \max\left\{\left|d_0^{(0)}\right|,
\left|d_1^{(0)}\right|, \left|d_2^{(0)}\right|\right\}$$ and
$$B_h(t) = 6+4(W_h-2)+6\Delta(t).$$

\begin{lemma}\label{backward}
  Let $h \in \{f,g\}$ and let $U \in \{0,1,2\}^*$.
  If $h(U)$ contains an instance $v$ of template $t$ of length at least
  $B_h(t)$, then there is an $h$-parent $T$ of $t$ such that $U$ contains
  an instance $V$ of $T$ and $|V|<|v|$.
\end{lemma}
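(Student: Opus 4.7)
The plan is to invert the construction of Lemma~\ref{forward}: starting from an instance $v = a_0 x_0 a_1 x_1 a_2 x_2 a_3 x_3 a_4$ of $t$ sitting as a factor inside $h(U)$, I would read off a corresponding instance $V = A_0 X_0 A_1 X_1 A_2 X_2 A_3 X_3 A_4$ of some parent template $T$ inside $U$, by aligning the positions of the $a_i$'s with the uniform block structure of $h(U)$.

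First I would fix an occurrence of $v$ inside $h(U) = h(u_1 u_2 \cdots u_n)$ and, for each $i$, locate the position of $a_i$: if $a_i$ is a single letter, it lies inside a unique block $h(u_{j_i})$, and I set $A_i := u_{j_i}$ with the forced $h$-split $[p_i,a_i,s_i]$; if $a_i$ is empty and its position lies at a block boundary, I set $A_i := \varepsilon$ with $p_i = s_i = \varepsilon$; otherwise $a_i$ is empty and sits strictly inside some $h(u_{j_i})$, and I take the corresponding empty-middle split. Taking $X_i$ to be the factor of $U$ strictly between $A_i$ and $A_{i+1}$ yields $x_i = s_i h(X_i) p_{i+1}$, and $V := A_0 X_0 \cdots A_4$ is my candidate.

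The main obstacle is to verify that this $V$ is well-defined, i.e., that the nonempty $A_i$'s occupy strictly increasing positions in $U$, and this is exactly where the hypothesis $|v| \geq B_h(t)$ is used. Since $\sum_i |a_i| \leq 5$, we have $\sum_{i=0}^{3} |x_i| \geq B_h(t) - 5 = 4W_h - 7 + 6\Delta(t)$. From $\sum_j |x_j| = 4|x_0| + 3d_0^{(0)} + 2d_1^{(0)} + d_2^{(0)}$ and $|x_i| = |x_0| + \sum_{k<i} d_k^{(0)}$, each $|x_i|$ can be written as $\tfrac{1}{4}\sum_j|x_j|$ plus an integer combination of the $d_k^{(0)}$ whose coefficients have absolute values summing to at most $6$, so $|x_i| \geq (\sum_j|x_j| - 6\Delta(t))/4 \geq W_h - 7/4$, forcing $|x_i| \geq W_h - 1$ by integrality. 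Since each block $h(u_j)$ has length exactly $W_h$, a gap of at least $W_h - 1$ between $a_i$ and $a_{i+1}$, together with the boundary convention chosen above, forces $a_i$ and $a_{i+1}$ into distinct blocks, so the positions of the $A_i$'s in $U$ are strictly increasing (with empty $A_i$'s sitting at boundaries between consecutive positions).

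Once $V$ is well-defined, setting $D_i := \sigma(X_{i+1}) - \sigma(X_i)$ and $T := [A_0,A_1,A_2,A_3,A_4,D_0,D_1,D_2]$, the parent relation $d_i = M_h D_i + b_i$ with $b_i = \sigma(s_{i+1}p_{i+2}) - \sigma(s_ip_{i+1})$ follows by reversing the calculation in the proof of Lemma~\ref{forward}, using Lemma~\ref{sum_matrix} to convert $\sigma(h(X_i))$ into $M_h \sigma(X_i)$. For the length comparison, uniformity of $h$ gives $|v| = W_h|V| - |p_0| - |s_4|$, hence $|V| = (|v| + |p_0| + |s_4|)/W_h$; since $|p_0| + |s_4| \leq 2(W_h - 1)$ and $W_h \geq 2$, this yields $|V| < |v|$ as soon as $|v| > 2$, which is ensured by $|v| \geq B_h(t) \geq 6$.
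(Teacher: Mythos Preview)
Your argument is correct and follows essentially the same route as the paper: both proofs reduce to showing $|x_i|\ge W_h-1$ from the hypothesis $|v|\ge B_h(t)$ and then invert the forward construction, the only difference being that the paper argues by contradiction via the sorted lengths $\xi_0\le\cdots\le\xi_3$ while you bound each $|x_i|$ directly in terms of $\sum_j|x_j|$ and the $d_k^{(0)}$. One small wording slip: the combination expressing $|x_i|-\tfrac14\sum_j|x_j|$ in the $d_k^{(0)}$ has \emph{rational} coefficients (with absolute values summing to at most $3/2$), not integer ones---equivalently, it is $4|x_i|$ that equals $\sum_j|x_j|$ plus an integer combination with coefficient sum at most $6$---but your stated inequality $|x_i|\ge(\sum_j|x_j|-6\Delta(t))/4$ is correct either way; your explicit treatment of the empty-$a_i$ boundary convention and of the bound $|V|<|v|$ via $W_h|V|=|v|+|p_0|+|s_4|$ actually fills in details the paper leaves implicit.
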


\begin{proof}
Suppose that $v = a_0x_0a_1x_1a_2x_2a_3x_3a_4$
is a factor of $u=h(U)$ and is an instance of
$$t=[a_0,a_1,a_2,a_3,a_4,d_0,d_1,d_2].$$
If $|x_i| \geq W_h-1$ for each $i$, then each $x_i$ can be written in the form $x_i = s_ih(X_i)p_{i+1}$
for some $X_i$,  where $s_i$ is a suffix of some $h(A_i)$ and
$p_{i+1}$ is a prefix of some $h(A_{i+1})$.  The analysis for Lemma~\ref{forward}
is thus reversible, and $U$ contains an instance of
$T=[A_0,A_1,A_2,A_3,A_4,D_0,D_1,D_2]$ where the $A_i$ have $h$-splits 
$[p_i,a_i,s_i]$ and
$$D_i=M_h^{-1}(d_i-b_i).$$

To complete the proof it suffices to show that if $|v| \geq B_h(t)$,
then for each $i$ we have $|x_i| \geq W_h-1$.

Suppose to the contrary that for some $i$ we have $|x_i| \leq W_h-2$.
Let $\{\xi_0 \leq \cdots \leq \xi_3\} = \{|x_0|, \ldots, |x_3|\}$;
i.e., the $\xi_i$ are the lengths of the $x_i$ arranged in
non-decreasing order.  Our hypothesis then is that $\xi_0 \leq W_h-2$.
By the definition of $\Delta(t)$, we have
$\xi_i \leq \xi_0 + i\Delta(t)$ for $i = 0,1,2,3$.  Hence,
  $$\sum_{i=0}^3 |x_i| = \sum_{i=0}^3 \xi_i \leq \sum_{i=0}^3(\xi_0+i\Delta(t)) = 4\xi_0 + \Delta(t)\sum_{i=0}^3 i = 4\xi_0 +
  6\Delta(t) \leq 4(W_h-2)+6\Delta(t).$$ Consequently, we have
  $$|v| \leq 5 + \sum_{i=0}^3 |x_i| \leq 5+4(W_h-2)+6\Delta(t) < B_h(t),$$
which is a contradiction.
\end{proof}

\begin{theorem}\label{add4pow}
  The infinite word $g(f^\omega(0))$ contains no additive $4$-powers.
\end{theorem}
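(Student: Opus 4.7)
The plan is to argue by contradiction with the template machinery just established. An additive $4$-power is exactly an instance of the \emph{trivial template} $t_0 = [\varepsilon,\varepsilon,\varepsilon,\varepsilon,\varepsilon,\mathbf{0},\mathbf{0},\mathbf{0}]$, where $\mathbf{0}$ denotes the zero vector in $\mathbb{Z}^2$. Assuming $g(f^\omega(0))$ contains such an instance, I would pull it back once through $g$ and then repeatedly through $f$ until reaching a situation that can be ruled out by inspection of a bounded prefix of $f^\omega(0)$.

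First, by Lemma~\ref{backward} with $h=g$: either the instance has length less than $B_g(t_0)=6+4(W_g-2)=86$, in which case its absence is verified by direct inspection of a prefix of $g(f^\omega(0))$, or $f^\omega(0)$ contains a strictly shorter instance of some $g$-parent $T$ of $t_0$. Next, using the identity $f^\omega(0)=f(f^\omega(0))$, I would iterate Lemma~\ref{backward} with $h=f$ to build a chain of $f$-ancestors $T=T_1,T_2,\ldots$ of $t_0$, each with a strictly shorter instance in $f^\omega(0)$. Because the instance lengths strictly decrease, some $T_k$ must carry an instance of length below $B_f(T_k)$, which is then located (or refuted) inside a bounded prefix of $f^\omega(0)$.

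The termination of this search rests on the observation that $M_f^{-1} = \tfrac{1}{6}\begin{bmatrix}2 & 0 \\ -1 & 3\end{bmatrix}$ contracts the first coordinate by a factor of~$3$. Since $D_i = M_f^{-1}(d_i - b_i)$ and each $b_i$ is bounded (depending only on the finitely many $f$-splits of letters of $\{0,1,2\}$), the quantity $\Delta(T_k)$ decreases geometrically until it enters a fixed bounded range, so $B_f(T_k)$ is bounded by a constant along the chain. Combined with the integrality requirement on the $D_i$, only finitely many templates can arise as $f$-ancestors of $t_0$, and the whole enumeration can be executed by a breadth-first search that prunes branches whose preimages fail to be integer vectors and, at each surviving leaf, verifies that the relevant prefix of $f^\omega(0)$ contains no instance.

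The main obstacle is the combinatorial bulk of this enumeration. Already for parents there are up to $4^5=1024$ candidate tuples $(A_0,\ldots,A_4) \in \{\varepsilon,0,1,2\}^5$, each with several possible $h$-splits; each surviving candidate then seeds its own $f$-ancestor tree. In practice the search is delegated to a computer: one implements the parent relation, the integrality pruning, and the search for instances in a sufficiently long prefix of $f^\omega(0)$, and verifies that every branch either prunes or terminates without producing an instance. A successful run delivers the contradiction and hence the theorem.
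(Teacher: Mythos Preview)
Your proposal is correct and follows essentially the same template-pullback strategy as the paper: reduce an additive $4$-power to the trivial template $t_0$, pull back once through $g$ (with the same bound $B_g(t_0)=86$), then iterate the $f$-backward lemma, arguing finiteness of the $f$-ancestor set via the contraction of $M_f^{-1}$ and integrality (the paper isolates this as a separate theorem and reports the concrete outcome: $17104$ ancestors, all with $\Delta\le 2$, hence $B_f\le 22$). The only noteworthy difference is the final verification: you propose checking, for each ancestor template, that no short factor of $f^\omega(0)$ realizes it, whereas the paper instead checks that $g(f^7(0))$ contains no additive $4$-power; both are finite checks that close the argument.
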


\begin{proof}
An additive $4$-power $v$ in $g(f^\omega(0))$ is an instance of the
template
$$t_0 = [\epsilon, \epsilon, \epsilon, \epsilon, \epsilon, [0,0],
[0,0], [0,0]].$$ Applying Lemma~\ref{backward} with $h=g$ and $t=t_0$
(and thus $\Delta(t)=0$ and $B_g(t)=6+4(22-2)+6(0)=86$), we see that
if $|v| \geq 86$, then the infinite word $f^\omega(0)$ contains an instance
$V$ of some $g$-parent $T$ of $t_0$. First, we verify by a brute force computation that $g(f^\omega(0))$ contains no additive $4$-power $v$ of length less than $86$. Next, we use a computer to build a list
$Anc$ of all possible $g$-parents of $t_0$.  We find that there are
$17056$ such $g$-parents.  We then compute the set of all
$f$-ancestors of these $g$-parents.  For each $g$-parent, we compute
all of its $f$-parents and add any new templates found to the list
$Anc$.  We find $48$ new templates, so our list now contains $17104$
templates.  We again compute $f$-parents of these $48$ new templates,
but we find that this results in no new templates.  The process
therefore terminates at this step with $Anc$ containing $17104$
templates.  It follows that $v$ is a factor of $g(f(V))$ for some
instance $V$ of a template $T$ in $Anc$.

Furthermore, we find that for every template $t$ in $Anc$, we have
$\Delta(t) \leq 2$.  Hence, applying Lemma~\ref{backward} iteratively
(with $h=f$ and $B_f(t) \leq 6+4(3-2)+6(2)=22)$), we see that if
$f^\omega(0)$ contains an instance $V$ of $T$, then it contains one
where $|V| \leq 21$.  Furthermore, Lemma~\ref{forward} guarantees that
$g(f(V))$ contains an additive $4$-power.

To complete the proof, it suffices to check that for every factor $V$
of $f^\omega(0)$ of length at most $21$, the word $g(f(V))$ does not
contain an additive $4$-power.  A brute force computation shows that
$f^6(0)$ contains all length-$21$ factors of
$f^\omega(0)$.  We can then examine the prefix $g(f^7(0))$ of
$g(f^\omega(0))$ and verify that it contains no additive $4$-powers,
which establishes the claim.
\end{proof}

\begin{theorem}\label{crit_exp}
  The infinite word $g(f^\omega(0))$ contains cubes but no $3^+$-powers.
\end{theorem}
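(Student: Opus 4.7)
The plan is to address the two assertions separately. For the existence of cubes, I will compute a short prefix of $g(f^\omega(0))$ and exhibit a cube by inspection; since $g(0)$ already contains the square $001001$, a cube should appear already in $g(f^2(0))$ or slightly later, and can be recorded explicitly.

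For the absence of $3^+$-powers, the approach will mirror Theorem~\ref{add4pow}. An ordinary $3^+$-power is a factor $x_1x_2x_3x_4$ with $x_1=x_2=x_3$ as words and $x_4$ a nonempty proper prefix of $x_1$. I will introduce a template formalism specialized to ordinary powers and prove analogues of Lemmas~\ref{forward} and~\ref{backward}: a sufficiently long $3^+$-power in $g(f^\omega(0))$ desubstitutes, via $g$, to an instance of a $g$-parent template in $f^\omega(0)$, and the set of $g$-parents is then closed under the $f$-parent relation. Because $M_f^{-1}$ and $M_g^{-1}$ are contractive on the vector part of any template, the ancestor set should be finite, and the argument will conclude by a brute-force verification on short factors of $f^\omega(0)$ (achievable via $f^k(0)$ for small $k$, as in the proof of Theorem~\ref{add4pow}).

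The main obstacle is the backward step for ordinary powers: unlike in the additive case, equality of $x_1, x_2, x_3$ is a genuine string condition, not a Parikh-vector condition, so the desubstitution must additionally certify that the preimage blocks $X_1, X_2, X_3$ are equal as words in $f^\omega(0)$. I expect to handle this by requiring that each $|x_i|$ exceed a threshold depending on $W_h$; once this holds, the offset of each $x_i$ inside its enclosing $h$-block is forced to coincide for $i=1,2,3$, so that the three blocks align with $h$-block boundaries in the same way, and equality of $x_1,x_2,x_3$ propagates to equality of $X_1,X_2,X_3$. Factors in which some $|x_i|$ falls below the threshold are handled by the bounded brute-force check.

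A tempting alternative is to observe that, since both $f$ and $g$ are uniform morphisms, $g(f^\omega(0))$ is an automatic sequence, so the absence of $3^+$-powers is a decidable property of the sequence; a mechanical verification in Walnut would then bypass the template formalism for ordinary powers entirely, at the cost of relying on a general decision procedure rather than an explicit template computation.
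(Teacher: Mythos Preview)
The paper's proof is exactly your ``tempting alternative'': it notes that $f$ is $3$-uniform and $g$ is $22$-uniform, so $g(f^\omega(0))$ is $3$-automatic, and it simply records the Walnut commands that verify the presence of cubes and the absence of $3^+$-powers. There is no template argument for ordinary powers in the paper.

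Your main proposal (a template/desubstitution argument for ordinary powers) is a reasonable alternative, but the step you correctly flag as the main obstacle is underspecified. The claim that once each $|x_i|$ exceeds a threshold ``the offset of each $x_i$ inside its enclosing $h$-block is forced to coincide'' does not follow from length alone: the starting positions of $x_1,x_2,x_3$ differ by $|x_1|$, so their offsets modulo $W_h$ differ by $|x_1|\bmod W_h$, which need not be zero. What actually forces the offsets to coincide is a \emph{recognizability} (synchronization) property of $h$: from any sufficiently long factor of $h(U)$ one can recover the positions of the $h$-block boundaries inside it, so that two equal long factors must carry the same boundary pattern and hence the same initial offset. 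This property does hold for both $f$ and $g$ here, but it has to be stated and checked (for $g$ in particular, which is not an endomorphism, one verifies directly that no two distinct words $g(a)$, $g(b)$ share a long common factor at conflicting alignments). Once recognizability is in hand, your propagation of $x_1=x_2=x_3$ to $X_1=X_2=X_3$ goes through, and the rest of the scheme (finite ancestor set, bounded brute-force check) is sound.

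In short: your alternative \emph{is} the paper's proof; your primary route can be made to work, but it buys you an explicit combinatorial argument at the cost of having to establish synchronization for $g$ and $f$, which you have not yet done.
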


\begin{proof}
  This is proved with Walnut~\cite{Walnut}.  Here are the Walnut
  commands:
\begin{verbatim}
morphism g "0->0001001110010001100011 1->0001001110011101100011
    2->0111001110011101100011":
morphism f "0->001 1-> 012 2->212":
promote aut f:
image SA g aut:

eval containsthreeplus "?msd_3 Ei, n (n>=1) & At (t<=2*n)
    => SA[i+t]=SA[i+n+t]":
# checks to see if the word contains a 3+-power (answers FALSE)

eval containscubes "?msd_3 Ei, n (n>=1) & At (t<2*n) => SA[i+t]=SA[i+n+t]":
# checks to see if there are cubes (answers TRUE)
\end{verbatim}
\end{proof}

Theorem~\ref{main_theorem1} now follows from Theorems~\ref{add4pow}
and \ref{crit_exp}.  It is optimal in the sense that the longest
binary word avoiding both abelian $4$-powers and (ordinary) $3$-powers
is the following word of length $39$:
$$001101011011001001101100100110110101100,$$
and there is only one such word (up to binary complement).

We should also point out that, in principle, the method of Rao and
Rosenfeld \cite{RR18} would also work to prove Theorem~\ref{add4pow}.
We initially attempted to apply this method, but the computation
produced a large number of templates and we were not able to complete
the required computer checks.

\section{Generalizing to additive $k$-powers}
In this section we sketch how to generalize the method of the
previous section to additive $k$-powers.  We first need to generalize
the definition of template.  Given an integer alphabet $X$, an
\emph{additive $k$-template} is a $2k$-tuple
$$t=[a_0,\ldots,a_k,d_0,\ldots,d_{k-1}],$$ where $a_i \in X^*$,
$|a_i|\le 1$, and the $d_i$ are in $\mathbb{Z}^2$. A word $w$ is an
\emph{instance} of $t$ if we can write $$w =a_0x_0a_1x_1\cdots x_{k-1}a_k,$$
such that for each $i$, $d_i=\sigma(x_{i+1})-\sigma(x_i)$.

If $h$ is a morphism defined on an integer alphabet, we define
\emph{$h$-split}, \emph{$h$-parent}, and \emph{$h$-ancestor} as in
Section~\ref{sec_add4pow}.  We need Lemma~\ref{sum_matrix} and
additive $k$-power analogues of Lemmas~\ref{forward} and
\ref{backward}.  We state these analogues next; their proofs are
straightforward generalizations of those of Lemmas~\ref{forward} and
\ref{backward}.  Our method only applies to morphisms that satisfy the
conditions of Lemma~\ref{sum_matrix}, so let us call any such morphism
$h$ a \emph{linear morphism} and let us use $M_h$ to denote the matrix
of Lemma~\ref{sum_matrix}.

\begin{lemma}\label{kforward}
  Let $h$ be a linear morphism and let $U \in X^*$.  Suppose additive
  $k$-template $T$ is an $h$-parent of additive $k$-template $t$.
  If $U$ contains an instance of $T$, then $h(U)$ contains an instance
  of $t$.
\end{lemma}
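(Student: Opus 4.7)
The plan is to imitate the proof of Lemma~\ref{forward} almost verbatim, just with the index range adjusted to the additive $k$-power setting. First I would unpack the hypothesis: because $T$ is an $h$-parent of $t$, each block $A_i$ ($0\le i\le k$) comes with an $h$-split $[p_i,a_i,s_i]$, so $h(A_i)=p_ia_is_i$, and the vectors satisfy $d_i=M_hD_i+b_i$ with $b_i=\sigma(s_{i+1}p_{i+2})-\sigma(s_ip_{i+1})$ for $0\le i\le k-2$. If $V=A_0X_0A_1X_1\cdots X_{k-1}A_k$ is the given instance of $T$ inside $U$, then $h(V)$ equals
$$p_0a_0s_0\,h(X_0)\,p_1a_1s_1\,h(X_1)\,p_2\cdots h(X_{k-1})\,p_ka_ks_k,$$
and stripping the outer $p_0$ and $s_k$ exposes the candidate factor $v=a_0x_0a_1x_1\cdots x_{k-1}a_k$ of $h(U)$, where $x_i:=s_ih(X_i)p_{i+1}$ for $0\le i\le k-1$.

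The next step is to verify that this $v$ really is an instance of $t$ by checking $\sigma(x_{i+1})-\sigma(x_i)=d_i$ for each $i\in\{0,\ldots,k-2\}$. This is the same chain of equalities as in Lemma~\ref{forward}: use additivity of $\sigma$ under concatenation to split off the $s_{i+1}p_{i+2}$ and $s_ip_{i+1}$ terms, invoke Lemma~\ref{sum_matrix} to replace $\sigma(h(X_j))$ by $M_h\sigma(X_j)$, factor out $M_h$ from the difference $\sigma(X_{i+1})-\sigma(X_i)=D_i$, and then apply the $h$-parent relation $d_i=M_hD_i+b_i$. The only change from the $k=4$ case is that the index $i$ now ranges over $\{0,\ldots,k-2\}$ rather than $\{0,1,2\}$; the identity is proved the same way for each $i$.

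There is no substantive obstacle in the argument itself; the only thing to be careful about is bookkeeping of the generalized definitions, namely that an additive $k$-template carries $k+1$ constant blocks $A_0,\ldots,A_k$ (hence $k+1$ $h$-splits producing $p_i,s_i$ for $0\le i\le k$) and $k-1$ difference vectors $D_0,\ldots,D_{k-2}$, so that the formulas $x_i=s_ih(X_i)p_{i+1}$ and $b_i=\sigma(s_{i+1}p_{i+2})-\sigma(s_ip_{i+1})$ make sense for the correct index ranges. Once the indexing is laid out consistently with the generalized definitions of $h$-split and $h$-parent from Section~\ref{sec_add4pow}, the computation is a routine transcription of the one already given for Lemma~\ref{forward}.
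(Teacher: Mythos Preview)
Your proposal is correct and matches the paper's approach exactly: the paper does not actually write out a proof of Lemma~\ref{kforward} but simply states that it is a ``straightforward generalization'' of the proof of Lemma~\ref{forward}, which is precisely what you have done. Your care with the indexing is also apt, since the paper's stated range $d_0,\ldots,d_{k-1}$ in the definition of additive $k$-template appears to be a typo for $d_0,\ldots,d_{k-2}$ (consistent with the $2k$-tuple count and the $k=4$ case), and you have used the correct range throughout.
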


If $h$ is a morphism and $t$ is an additive $k$-template, we write $d_i = \left[d_i^{(0)}, d_i^{(1)}\right]$ and
define  the quantities $$\Delta(t) = \max\left\{\left|d_i^{(0)}\right|
  : i=0,\ldots,k-2\right\}$$ and
$$B_h(t) = k+2+k(W_h-2)+\frac{(k-1)k}{2}\Delta(t).$$

\begin{lemma}\label{kbackward}
  Let $h$ be a linear morphism such that $|h(a)|\geq 2$ for all
  $a \in X$ and such that $M_h$ is invertible. Let $U \in X^*$.  If
  $h(U)$ contains an instance $v$ of additive $k$-template $t$ of
  length at least $B_h(t)$, then there is an $h$-parent $T$ of $t$
  such that $U$ contains an instance $V$ of $T$ and $|V|<|v|$.
\end{lemma}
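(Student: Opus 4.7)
The plan is to mirror the proof of Lemma~\ref{backward} step by step, changing only what is forced by the shift from $4$ to a general $k$, from $f$ or $g$ to an abstract linear morphism $h$, and from the concrete bound $W_h \geq 3$ to the hypothesis $|h(a)| \geq 2$. Since $v$ is an instance of $t$, write $v = a_0 x_0 a_1 x_1 \cdots x_{k-1} a_k$. The main claim to check is that if $|x_i| \geq W_h - 1$ for every $i$, then each $x_i$ decomposes as $x_i = s_i h(X_i) p_{i+1}$, with $s_i$ a suffix of some $h(A_i)$ and $p_{i+1}$ a prefix of some $h(A_{i+1})$; the hypothesis $|h(a)| \geq 2$ is what guarantees that these boundary pieces fit together without conflict. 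Given such splits, the chain of equalities in the proof of Lemma~\ref{kforward} reverses: defining $D_i = M_h^{-1}(d_i - b_i)$ (well-defined by invertibility of $M_h$) produces an $h$-parent $T = [A_0, \ldots, A_k, D_0, \ldots, D_{k-1}]$ of $t$ together with an instance $V = A_0 X_0 A_1 \cdots X_{k-1} A_k$ of $T$ in $U$. The strict bound $|V| < |v|$ then follows from the identity $h(V) = p_0 v s_k$ combined with $|h(V)| \geq 2|V|$.

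The main step, and the only one requiring real adjustment for general $k$, is showing that $|x_i| \geq W_h - 1$ for every $i$. Suppose for contradiction that $\xi_0 := \min_i |x_i| \leq W_h - 2$, and let $i_0$ realize this minimum. The first coordinate of $d_i = \sigma(x_{i+1}) - \sigma(x_i)$ is $|x_{i+1}| - |x_i|$, so iterating gives $\bigl||x_j| - |x_l|\bigr| \leq |j-l|\Delta(t)$. For each $i \in \{0, \ldots, k-1\}$, at least $i+1$ indices $j \in \{0, \ldots, k-1\}$ satisfy $|j - i_0| \leq i$ (a simple count, regardless of where $i_0$ sits in $\{0,\ldots,k-1\}$), so at least $i+1$ of the lengths $|x_j|$ are bounded by $\xi_0 + i\Delta(t)$; sorting them into $\xi_0 \leq \xi_1 \leq \cdots \leq \xi_{k-1}$ therefore yields $\xi_i \leq \xi_0 + i\Delta(t)$. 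Summing,
$$\sum_{i=0}^{k-1} |x_i| = \sum_{i=0}^{k-1} \xi_i \leq k\xi_0 + \Delta(t)\sum_{i=0}^{k-1} i \leq k(W_h - 2) + \frac{(k-1)k}{2}\Delta(t),$$
and adding the $k+1$ boundary letters (each of length at most $1$) gives
$$|v| \leq (k+1) + k(W_h - 2) + \frac{(k-1)k}{2}\Delta(t) < B_h(t),$$
contradicting the hypothesis $|v| \geq B_h(t)$.

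No substantive new obstacle is anticipated: the counting bound $\xi_i \leq \xi_0 + i\Delta(t)$ is the one combinatorial step that has to be re-expressed for general $k$, and the remaining work (the splits, the inversion of the $h$-parent computation via $M_h^{-1}$, and the final arithmetic with $B_h$) carries over essentially verbatim from the proof of Lemma~\ref{backward}.
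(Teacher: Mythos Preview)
Your proposal is correct and follows exactly the approach the paper intends: the authors explicitly state that the proof of Lemma~\ref{kbackward} is a ``straightforward generalization'' of that of Lemma~\ref{backward} and give no further details, and your write-up is precisely that generalization. In fact you supply more justification than the paper does for the key inequality $\xi_i \leq \xi_0 + i\Delta(t)$ (via the counting argument on indices within distance $i$ of $i_0$) and for the strict inequality $|V| < |v|$ (via $h(V) = p_0 v s_k$ and $|h(V)| \geq 2|V|$), both of which the paper simply asserts in the $k=4$ case.
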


\begin{theorem}\label{finite_anc}
  Let $h$ be a linear morphism such that all eigenvalues of $M_h$ are
  larger than $1$ in absolute value and let $t$ be an additive
  $k$-template.  Then the set of $h$-ancestors of $t$ is finite.
\end{theorem}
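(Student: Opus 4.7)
The plan is to show that any chain of successive $h$-parents starting from $t$ remains bounded in a suitable norm, so that the parameters of $h$-ancestors live in a finite set. A template has two kinds of parameters: the words $a_0,\ldots,a_k$, which all live in $\{w \in X^* : |w|\le 1\}$ and so already range over a finite set; and the integer vectors $d_0,\ldots,d_{k-1} \in \mathbb{Z}^2$, which are what I need to control. Note that the eigenvalue hypothesis implies $\det(M_h)\neq 0$, so $M_h$ is invertible and the relation $d_i = M_hD_i + b_i$ inverts to $D_i = M_h^{-1}(d_i - b_i)$.

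First I would bound the ``correction'' vectors $b_i = \sigma(s_{i+1}p_{i+2}) - \sigma(s_ip_{i+1})$ arising in the $h$-parent relation. Since each $A_j$ has $|A_j|\le 1$ and each $p_j,s_j$ is a prefix or suffix of $h(A_j)$, there are only finitely many available $h$-splits and hence finitely many possible $b_i$. Fix a constant $B$ such that $\|b_i\| \le B$ in some reference norm on $\mathbb{R}^2$. Now if $t^{(0)} = t, t^{(1)}, t^{(2)}, \ldots$ is any chain with $t^{(j+1)}$ an $h$-parent of $t^{(j)}$, then
$$d_i^{(j+1)} \;=\; M_h^{-1}\bigl(d_i^{(j)} - b_i^{(j)}\bigr),$$
which unrolls to
$$d_i^{(n)} \;=\; M_h^{-n}d_i^{(0)} \;-\; \sum_{j=0}^{n-1} M_h^{-(n-j)} b_i^{(j)}.$$

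Next I invoke the eigenvalue hypothesis: since every eigenvalue of $M_h$ exceeds $1$ in modulus, the spectral radius satisfies $\rho(M_h^{-1}) < 1$. By a standard fact from spectral theory, there is a norm $\|\cdot\|$ on $\mathbb{R}^2$ with $\|M_h^{-1}\| \le \lambda$ for some $\lambda < 1$ (for instance, work in a basis adapted to the Jordan form of $M_h^{-1}$). Then $\|M_h^{-k}\|\le \lambda^k$, and the displayed formula gives
$$\bigl\|d_i^{(n)}\bigr\| \;\le\; \lambda^n \|d_i^{(0)}\| + B\sum_{k=1}^{n}\lambda^k \;\le\; \|d_i^{(0)}\| + \frac{B\lambda}{1-\lambda},$$
a bound independent of $n$. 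Hence the $d_i$ of every $h$-ancestor of $t$ lie in a bounded subset of $\mathbb{R}^2$, which meets $\mathbb{Z}^2$ in a finite set. Combined with the finitely many choices for $(a_0,\ldots,a_k)$, this gives finitely many $h$-ancestors of $t$.

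The main obstacle is producing the norm in which $M_h^{-1}$ is a strict contraction. The eigenvalue assumption is exactly what makes this possible: without it, the accumulated correction terms $\sum M_h^{-(n-j)}b_i^{(j)}$ could grow without bound and the $d_i$ of successive ancestors need not remain in a bounded region. Once the contraction is in hand the rest is bookkeeping, and one can in fact extract an explicit a priori bound on $\|d_i\|$ in terms of $t$, $h$, $W_h$, and the entries of $M_h$, which is useful when implementing the algorithm.
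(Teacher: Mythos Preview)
Your argument is correct and follows essentially the same route as the paper: iterate the $h$-parent relation to express each ancestor's $D_i$ as a sum $\sum_j M_h^{-j}c_j$ with the $c_j$ drawn from a finite set, use the eigenvalue hypothesis to bound this sum, and then use integrality to conclude finiteness. The only cosmetic difference is that the paper invokes the Jordan form of $M_h^{-1}$ to see directly that $\sum_{i\ge 0}M_h^{-i}$ converges, whereas you pass through an adapted norm in which $M_h^{-1}$ is a strict contraction; these are equivalent ways of exploiting $\rho(M_h^{-1})<1$.
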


\begin{proof}
  If the eigenvalues of $M_h$ are larger than $1$ in absolute value, then $M_h$ is
  invertible and its eigenvalues are smaller than $1$ in absolute
  value.  Let $\lambda_1$ and $\lambda_2$ be the eigenvalues of $M_h^{-1}$
  and let $M_h^{-1} = P^{-1}JP$, where $J$ is the Jordan form of $M_h$.  Then either
  \[
    J = \begin{bmatrix}\lambda_1&0\\0&\lambda_2\end{bmatrix}
    \quad\text{ or }\quad
    J = \begin{bmatrix}\lambda_1&1\\0&\lambda_1\end{bmatrix}.
  \]
  Hence,
  \[
    M_h^{-n} = P^{-1}\begin{bmatrix}\lambda_1^n&0\\0&\lambda_2^n\end{bmatrix}P
    \quad\text{ or }\quad
    M_h^{-n} = P^{-1}\begin{bmatrix}\lambda_1^n&n\lambda_1^{n-1}\\0&\lambda_1^n\end{bmatrix}P,
  \]
  and so $\sum_{i \geq 0} M_h^{-i}$ converges.
  
  Let $$t=[a_0,\ldots,a_k,d_0,\ldots,d_{k-1}]$$ be an additive
  $k$-template and let $$T=[A_0,\ldots,A_k,D_0,\ldots,D_{k-1}]$$
  be an $h$-ancestor of $t$ obtained by applying the $h$-parent
  relation $\ell$ times.  If $\ell=1$, then, as in the proof of
  Lemma~\ref{backward}, we have $D_i = M_h^{-1}(d_i-b_i)$,
  where there are only finitely many choices for $b_i$.  For larger
  $\ell$, by iteration, we find that $D_i$ has the form 
  \begin{equation}\label{anc_vector}
    D_i = c_\ell M_h^{-\ell} + c_{\ell-1}M_h^{-(\ell-1)} + \cdots +
    c_1M_h^{-1} + c_0,
  \end{equation}
  where the $c_j$ are all taken from a finite set of vectors.  Since
  $\sum_{i \geq 0} M_h^{-i}$ converges and since
  $D_i \in \mathbb{Z}^2$, we see that there are only
  finitely many possible vectors $D_i$, and hence, only finitely many
  $h$-ancestors $T$ of $t$.
\end{proof}

\begin{corollary}
  Let $f:X^*\to X^*$ and $g:X^*\to Y^*$ be linear morphisms over
  integer alphabets $X$ and $Y$ such that $g(f^\omega(a))$ is an
  infinite word for some $a \in X$.  If
  \begin{itemize}
    \item $|f(a)| \geq 2$ and $|g(a)| \geq 2$ for all $a \in X$,
    \item $M_f$ and $M_g$ are invertible, and
    \item the eigenvalues of $M_f$ are larger than $1$ in absolute
      value,
  \end{itemize}
  then it is possible to decide if $g(f^\omega(a))$ avoids additive
  $k$-powers.
\end{corollary}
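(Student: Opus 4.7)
The plan is to generalize the proof of Theorem~\ref{add4pow} to the present setting. An additive $k$-power in $g(f^\omega(a))$ is exactly an instance of the trivial additive $k$-template
$$t_0 = [\epsilon,\epsilon,\ldots,\epsilon,[0,0],\ldots,[0,0]],$$
so deciding whether $g(f^\omega(a))$ avoids additive $k$-powers is the same as deciding whether it contains no instance of $t_0$.

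First I would enumerate all $g$-parents of $t_0$. Each such parent is specified by choosing letters (or the empty word) $A_0,\ldots,A_k$ together with $g$-splits $[p_i,a_i,s_i]$ of them; the vectors $D_i$ are then forced by $D_i = M_g^{-1}(d_i-b_i)$, with $b_i$ determined by the splits. Only finitely many such choices arise, so the $g$-parents of $t_0$ form a finite, effectively computable set. For each $g$-parent $T$, Theorem~\ref{finite_anc} guarantees that the set of $f$-ancestors of $T$ is finite, and a breadth-first search that repeatedly adjoins all $f$-parents of templates found so far therefore terminates and yields this set. Taking the union over all $g$-parents of $t_0$ gives a finite, computable set $\mathrm{Anc}$ of additive $k$-templates, and I would set $N := \max_{T \in \mathrm{Anc}} B_f(T)$.

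Two applications of Lemma~\ref{kbackward} now reduce the problem to finite checks. First, applied with $h=g$, it shows that any instance of $t_0$ in $g(f^\omega(a))$ of length at least $B_g(t_0)$ arises from an instance in $f^\omega(a)$ of some $g$-parent of $t_0$, and hence of some $T \in \mathrm{Anc}$. Second, applied iteratively with $h=f$, it shows that whenever $f^\omega(a)$ contains an instance of a template in $\mathrm{Anc}$ of length at least $N$, it also contains a strictly shorter instance of some (possibly different) template in $\mathrm{Anc}$. Conversely, iterating Lemma~\ref{kforward} shows that every instance in $f^\omega(a)$ of a template in $\mathrm{Anc}$ yields an additive $k$-power in $g(f^\omega(a))$. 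Hence $g(f^\omega(a))$ contains an additive $k$-power if and only if either (i) it contains one of length less than $B_g(t_0)$, or (ii) $f^\omega(a)$ contains an instance of some template in $\mathrm{Anc}$ of length at most $N$.

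Both (i) and (ii) are effectively checkable: since $|f(b)| \geq 2$ for every $b \in X$, one can choose $m$ large enough that $|f^m(b)| \geq N$ for every letter $b$ occurring in $f^\omega(a)$, and then every factor of $f^\omega(a)$ of length at most $N$ appears inside $f^m(bc)$ for some two-letter factor $bc$ of $f^\omega(a)$; the finite list of such two-letter factors is obtained from a sufficiently long prefix $f^j(a)$, and a similar argument handles (i) inside a prefix of $g(f^\omega(a))$. The main obstacle is establishing the uniform length bound $N$, which is exactly where the spectral hypothesis on $M_f$ is used: without Theorem~\ref{finite_anc} the set $\mathrm{Anc}$ could be infinite, no uniform $N$ would exist, and the backward search would not terminate.
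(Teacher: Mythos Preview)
Your proposal is correct and follows essentially the same route as the paper's proof: reduce to the trivial template $t_0$, compute its $g$-parents, close under $f$-parents using Theorem~\ref{finite_anc} to obtain a finite set $\mathrm{Anc}$, and then invoke Lemmas~\ref{kforward} and~\ref{kbackward} to reduce everything to a bounded-length check. You add a useful explicit argument (the $f^m(bc)$ trick) for why the final finite checks are effectively computable, which the paper simply asserts.
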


\begin{proof}
We follow the procedure from the proof of Theorem~\ref{add4pow}.
 An additive $k$-power $v$ in $g(f^\omega(0))$ is an instance of the
additive $k$-template
$$t_0 = [\epsilon, \ldots, \epsilon, [0,0], \ldots, [0,0]].$$
We first check by a brute force computation that $g(f^\omega(a))$
contains no additive $k$-power of length less than
$B_g(t_0) = k+2+k(W_g-2)$.  As in the proof of Theorem~\ref{add4pow},
we then need to compute the set of $g$-parents of $t_0$ and the set
$Anc$ of all $h$-ancestors of each of the $g$-parents of $t_0$.  By
Theorem~\ref{finite_anc}, the set $Anc$ is finite and effectively
computable.  By Lemmas~\ref{kforward} and \ref{kbackward}, it now
suffices to verify that for every $t \in Anc$, no factor of
$f^\omega(a)$ of length less than $B_f(t)$ is an instance of $t$.
This is a finite (computer) check.
\end{proof}

\section{The construction for Theorem~\ref{main_theorem2}}

A \emph{(multi-valued) substitution} on the alphabet $\Sigma$ is a function $\theta:\Sigma^*\rightarrow 2^{\Sigma^*}$ that satisfies
\[
\theta(uv)=\{UV\colon\ U\in\theta(u),V\in \theta(v)\}
\]
for all $u,v\in\Sigma^*$.  We extend $\theta$ to $2^{\Sigma^*}$ by defining
\[
\theta(S)=\bigcup_{u\in S}\theta(u)
\]
for all $S\subseteq \Sigma^*$.  This extension allows us to compose substitutions.

A substitution $\theta$ on $\Sigma$ is \emph{abelian $n$-power-free} if every word $V\in\theta(v)$ avoids abelian $n$-powers whenever $v\in\Sigma^*$ avoids abelian $n$-powers.  
Currie~\cite{Cur04} established that there are $\Omega(1.044^n)$ binary words of length $n$ that avoid abelian $4$-powers by exhibiting an abelian $4$-power free substitution on $\{0,1\}$.  We improve this to $\Omega(1.172^n)$ by composing several abelian $4$-power free substitutions on $\{0,1\}$.  

We begin with a technical lemma.  We say that a substitution $\theta$ on $\Sigma$ is \emph{letter-wise uniform} if for every $a\in\Sigma$, we have $|u|=|v|$ for all $u,v\in\theta(a)$.

\begin{lemma}\label{Lemma:GeneralGrowthRate}
Suppose that there exists an abelian $4$-power free word $\normalfont{\textbf{w}}\in\{0,1\}^\omega$ in which the frequency of the letter $0$ exists and is equal to $\alpha$.
Let $\theta:\{0,1\}^*\rightarrow 2^{\{0,1\}^*}$ be an abelian $4$-power free substitution which is letter-wise uniform.  For $a\in\{0,1\}$, let $\ell_a=|u|$ for some $u\in\theta(a)$, and let $m_a=|\theta(a)|$.  Then for any fixed $\epsilon>0$, there are $\Omega\left(\beta^n\right)$ abelian $4$-power free binary words of length $n$, where
\[
\beta=(m_0^{\alpha-\epsilon}m_1^{1-\alpha-\epsilon})^{\frac{1}{(\alpha+\epsilon)\ell_0+(1-\alpha+\epsilon)\ell_1}}. \qedhere
\]
\end{lemma}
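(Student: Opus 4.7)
The plan is to count abelian $4$-power-free binary words of length $n$ by applying $\theta$ to a long prefix of $\mathbf{w}$ and, if necessary, truncating. Since $\mathbf{w}$ avoids abelian $4$-powers and $\theta$ is abelian $4$-power-free, every word in $\theta(\mathbf{w}[0..N-1])$ avoids abelian $4$-powers; because abelian $4$-power-freeness is inherited by prefixes, each truncation I produce will also qualify.

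I would first pin down an exact count of the relevant $\theta$-images. For each $N$, let $k_0(N)$ and $k_1(N)=N-k_0(N)$ denote the number of $0$s and $1$s in $\mathbf{w}[0..N-1]$. Because $\theta$ is letter-wise uniform, every $W\in\theta(\mathbf{w}[0..N-1])$ has the same length $L_N:=\ell_0 k_0(N)+\ell_1 k_1(N)$, and given $W$ the known block lengths determine the unique decomposition $W=W_0W_1\cdots W_{N-1}$ with $W_i\in\theta(w_i)$. Hence $(W_0,\ldots,W_{N-1})\mapsto W_0\cdots W_{N-1}$ is injective and $|\theta(\mathbf{w}[0..N-1])|=m_0^{k_0(N)}m_1^{k_1(N)}$. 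Given $n$, I would choose $N$ minimally with $L_N\geq n$, so that $L_{N-1}<n\leq L_N\leq L_{N-1}+W$, where $W:=\max(\ell_0,\ell_1)$. Any two tuples that disagree at some index $i\leq N-2$ produce $\theta$-images that already differ within the first $L_{N-1}<n$ letters; hence their length-$n$ truncations (obtained after any completion to an element of $\theta(\mathbf{w}[0..N-1])$) are distinct. This shows that the number of length-$n$ abelian $4$-power-free binary words is at least $m_0^{k_0(N-1)}m_1^{k_1(N-1)}$.

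To conclude, I would invoke the frequency hypothesis: for the fixed $\epsilon>0$ and all sufficiently large $N$, we have $(\alpha-\epsilon)N\leq k_0(N)\leq(\alpha+\epsilon)N$ and $(1-\alpha-\epsilon)N\leq k_1(N)\leq(1-\alpha+\epsilon)N$. Setting $M:=m_0^{\alpha-\epsilon}m_1^{1-\alpha-\epsilon}$ and $K:=(\alpha+\epsilon)\ell_0+(1-\alpha+\epsilon)\ell_1$, and assuming $M\geq 1$ (otherwise $\beta\leq 1$ and $\Omega(\beta^n)$ is vacuous), the counting bound becomes at least $M^{N-1}$, while $L_{N-1}\leq K(N-1)$ combined with $L_{N-1}\geq n-W$ gives $N-1\geq(n-W)/K$. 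Therefore the count is at least $M^{(n-W)/K}=\beta^{n-W}=\Omega(\beta^n)$. The main (mild) obstacle is pairing each frequency estimate with the right role: the lower bounds $\alpha-\epsilon,\,1-\alpha-\epsilon$ must enter the counting exponent (numerator of $\beta$), while the upper bounds $\alpha+\epsilon,\,1-\alpha+\epsilon$ must enter the length-per-letter factor (denominator), which is precisely the asymmetric form of $\beta$ in the statement.
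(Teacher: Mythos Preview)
Your proposal is correct and follows essentially the same route as the paper: apply $\theta$ to a suitably long prefix of $\mathbf{w}$, truncate to length $n$, and use the frequency hypothesis to bound the number of images below by $\beta^n$ up to a constant. You are in fact slightly more careful than the paper on two points it leaves implicit---the injectivity of the map $(W_0,\ldots,W_{N-1})\mapsto W_0\cdots W_{N-1}$ (which justifies $|\theta(v)|=m_0^{|v|_0}m_1^{|v|_1}$) and the passage from words of length $L_{N-1}$ to words of length exactly $n$---and you also note the trivial case $M<1$.
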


\begin{proof}
Let $\textbf{W}\in \theta(\textbf{w})$, and consider the prefix $V$ of $\textbf{W}$ of length $n$.  Write $V=V'p$, where $V'\in \theta(v)$ for some prefix $v$ of $\textbf{w}$ and $|p|\leq \max_{a\in\{0,1\}}(\ell_a-1)$.  By the assumption that $\theta$ is abelian $4$-power free, every word in $\theta(v)$ avoids abelian $4$-powers.

Fix $\epsilon>0$.  For $n$ sufficiently large, we have $(\alpha-\epsilon)|v|\leq |v|_0\leq (\alpha+\epsilon)|v|$.  Therefore, we have
\[
n-|p|=\ell_0|v|_0+\ell_1|v|_1\leq \big((\alpha+\epsilon)\ell_0+(1-\alpha+\epsilon)\ell_1\big)|v|,
\]
which gives 
\[
|v|\geq \frac{n-|p|}{(\alpha+\epsilon)\ell_0+(1-\alpha+\epsilon)\ell_1}.
\]
Thus we have
\begin{align*}
|\theta(v)|&=m_0^{|v|_0}\cdot m_1^{|v|_1}\\
&\geq m_0^{(\alpha-\epsilon)|v|}m_1^{(1-\alpha-\epsilon)|v|}\\
&=\left(m_0^{\alpha-\epsilon}m_1^{1-\alpha-\epsilon}\right)^{|v|}\\
&\geq \left(m_0^{\alpha-\epsilon}m_1^{1-\alpha-\epsilon}\right)^{\frac{n-|p|}{(\alpha+\epsilon)\ell_0+(1-\alpha+\epsilon)\ell_1}}\\
&=c\left[(m_0^{\alpha-\epsilon}m_1^{1-\alpha-\epsilon})^{\frac{1}{(\alpha+\epsilon)\ell_0+(1-\alpha+\epsilon)\ell_1}}\right]^n,
\end{align*}
where $c=(m_0^{\alpha-\epsilon}m_1^{1-\alpha-\epsilon})^{\frac{-|p|}{(\alpha+\epsilon)\ell_0+(1-\alpha+\epsilon)\ell_1}}$.
\end{proof}

Let $\theta_0,\theta_1:\Sigma^*\rightarrow 2^{\Sigma^*}$ be defined as follows:
\begin{align*}
\theta_0(0)&=\{0001\} & \theta_1(0)&=\{011,101\}\\
\theta_0(1)&=\{011,101\} & \theta_1(1)&=\{0001\}
\end{align*}
Note that $\theta_1$ is obtained from $\theta_0$ by swapping the images of $0$ and $1$.
Currie~\cite{Cur04} proved that $\theta_0$ is abelian $4$-power free by generalizing a result of Dekking~\cite{Dek79}.  It follows that $\theta_1$ is abelian $4$-power free as well.

Let $x\in\{0,1\}^*$, and write $x=x_1x_2\cdots x_k$, where the $x_i$ are in $\{0,1\}$.  Then we define $\theta_x=\theta_{x_k}\circ \theta_{x_{k-1}}\circ\cdots\circ \theta_{x_1}$. Since $\theta_0$ and $\theta_1$ are letter-wise uniform and abelian $4$-power free, it follows that $\theta_x$ is letter-wise uniform and abelian $4$-power free.  Further, for any fixed word $x\in\{0,1\}^*$, it is straightforward to compute the constants $\ell_0$, $\ell_1$, $m_0$, and $m_1$ described in the statement of Lemma~\ref{Lemma:GeneralGrowthRate} for the substitution $\theta_x$.

\begin{proof}[Proof of Theorem~\ref{main_theorem2}]
Let $h:\{0,1\}^*\rightarrow \{0,1\}^*$ be the morphism defined by $h(0)=0001$ and $h(1)=011$.  It is well-known that $h^\omega(0)$ avoids abelian $4$-powers~\cite{Dek79}.  By a straightforward calculation (see~\cite[Theorem~8.4.7]{AllShall2003}), the frequency of the letter $0$ in $h^\omega(0)$ is $\alpha=\frac{\sqrt{5}-1}{2}$.

For each $k\in\{1,2,\ldots, 15\}$ and every $x\in\{0,1\}^k$, we used a computer to apply Lemma~\ref{Lemma:GeneralGrowthRate} with the word $\textbf{w}=h^\omega(0)$, the substitution $\theta=\theta_x$, and the number $\epsilon=10^{-5}$ to obtain a number $\beta_x$ such that the number of abelian $4$-power free binary words of length $n$ is $\Omega(\beta_x^n)$.  For each $k\in\{1,2,\ldots,10\}$, the maximum value of $\beta_x$ was maximized by a unique word $x\in\{0,1\}^k$.  This word $x$, and the value of $\beta_x$, is shown in Table~\ref{Table:MaxBeta}.  In particular, this establishes Theorem~\ref{main_theorem2}.
\end{proof}

\begin{table}
\begin{center}
\begin{tabular}{l l l}
$k$ & $x$ & $\beta_x$\\\hline
$1$ & $1$ & 1.13503537\\
$2$ & $01$ & 1.15986115\\
$3$ & $101$ & 1.16840698\\
$4$ & $1101$ & 1.17123737\\
$5$ & $11101$ & 1.17195987\\
$6$ & $111101$ & 1.17220553\\
$7$ & $1111101$ & 1.17226224\\
$8$ & $11111101$ & 1.17228469\\
$9$ & $011111101$ & 1.17228931\\
$10$ & $111111101$ & 1.17229090\\
$11$ & $01111111101$ & 1.17229161\\
$12$ & $001011111101$ & 1.17229185\\
$13$ & $0001011111101$ & 1.17229194\\
$14$ & $00001011111101$ & 1.17229198\\
$15$ & $000101011111101$ & 1.17229199
\end{tabular}
\end{center}
\caption{The unique word $x\in\{0,1\}^k$ that maximizes $\beta_x$.  (The value of $\beta_x$ is truncated after $8$ decimal places.)}
\label{Table:MaxBeta}
\end{table}

When we performed the computations reported in
Table~\ref{Table:MaxBeta}, we didn't just use $\theta_0$ and
$\theta_1$, but we also included substitutions $\theta_2$ defined by
$\theta_2(0)=\{0111\}$, $\theta_2(1)=\{001,010\}$, and $\theta_3$
defined by swapping the images of $\theta_2(0)$ and $\theta_2(1)$,
which both satisfy the criteria of Currie~\cite{Cur04} to be abelian
$4$-power free.  However, there was always an optimal composition that
only involved $\theta_0$ and $\theta_1$.

The current best known upper bound for the number of abelian $4$-power
free words over a binary alphabet is $O(1.374164^n)$ due to Samsonov and
Shur~\cite{SS12}.

\section{Open problems}
The set of all infinite binary words avoiding abelian $4$-powers is
still not completely understood and there are still problems to
investigate.  One is the following:

\begin{problem}
  What is the minimum possible frequency of
$0$'s in an infinite binary word avoiding abelian fourth powers?
\end{problem}

Some empirical calculations suggest it might be around $1/3$.  The
frequency of $0$'s in the binary complement of the word $h^\omega(0)$
from the proof of Theorem~\ref{main_theorem2} is
$(3-\sqrt{5})/2 \approx 0.381966$.

\end{document}